\theoremstyle{plain}
  \newtheorem{thm}{Theorem}[section]
  \newtheorem*{thm*}{Theorem}
  \newtheorem{cor}[thm]{Corollary}
\theoremstyle{definition}
  \newtheorem{dfn}[thm]{Definition}
  \newtheorem{exmp}[thm]{Example}
\theoremstyle{remark}
  \newtheorem{rem}[thm]{Remark}
\let\opn\operatorname 
\numberwithin{equation}{section}
\newcommand{\NN}{\mathbb{N}} 
\newcommand{\ZZ}{\mathbb{Z}} 
\newcommand{\RR}{\mathbb{R}} 
\newcommand{\kk}{\Bbbk}		
\newcommand{\ba}{\mathbf{a}}
\newcommand{\mm}{\mathfrak{m}} 
\newcommand{\nn}{\mathfrak{n}}
\newcommand{\pp}{\mathfrak{p}} 
\newcommand{\M}{\mathbb{M}}
\newcommand{\oW}{\overline{W}} 
\newcommand{\cC}{\mathcal{C}}
\newcommand{\bM}{\overline{\M}}
\newcommand{\baR}{\overline{R}}
\newcommand{\baA}{\overline{A}}
\newcommand{\Mod}{\opn{Mod}}
\newcommand{\modd}{\opn{mod}} 
\newcommand{\stMod}{{}^*\!\Mod}
\newcommand{\stmod}{{}^*\!\modd}
\newcommand{\Hom}{\opn{Hom}}
\newcommand{\Ext}{\opn{Ext}}
\newcommand{\Spec}{\opn{Spec}}
\newcommand{\stSpec}{{}^*\!\Spec}
\newcommand{\height}{\opn{ht}}
\newcommand{\relint}{\opn{rel-int}}
\newcommand{\cpx}[1]{#1^{\bullet}} 
\newcommand{\Rone}{(\mathrm{R}_1)}
\newcommand{\set}[1]{\{#1\}}
\newcommand{\with}{\mid}
\begin{document}

\title[The canonical module and Serre's $\Rone$.]{A Canonical module characterization of Serre's~$\Rone$.}

\author{Lukas Katth\"an}

\address{Goethe-Universit\"at Frankfurt, Institut f\"ur Mathematik, 60054 Frankfurt am Main, Germany}
\email{katthaen@math.uni-frankfurt.de}

\author{Kohji Yanagawa}

\address{Department of Mathematics, Kansai University,
	Suita 564-8680, Japan} \email{yanagawa@kansai-u.ac.jp}

\subjclass[2010]{Primary 13C05; Secondary 13B22, 13D07}

\keywords{Canonical module; Serre's $R_1$; Affine semigroup ring} 

\thanks{The second author was supported by the JSPS KAKENHI 25400057.}

\begin{abstract}
In this short note, we give a characterization of domains satisfying Serre's condition $({\rm R}_1)$ in terms of their canonical modules.
In the special case of toric rings, this generalizes a result of the second author \cite{Y15} where the normality is described in terms of the ``shape'' of the canonical module.
\end{abstract}

\maketitle

\section{Introduction}
Let $A$ be a noetherian (commutative) domain.
Recall that $A$ is said to satisfy Serre's condition $\Rone$ if all localizations $A_\pp$ at prime ideals $\pp$ of height at most one are regular local rings.
In the present note, we characterize this condition in terms of their canonical modules under mild technical conditions (i)--(iii) on $A$ 
(see  \S2 for the detail of these conditions).
Our main result is the following:
\begin{restatable*}{thm}{Ronegeneral}
\label{R1 general}
Let $A$ be a noetherian domain satisfying (i)--(iii) and let $\baA$ denote the integral closure of $A$.
Then following are equivalent:
	\begin{enumerate}[(1)]
	\item The ring $A$ satisfies Serre's $\Rone$. 
	\item There is a canonical module of $\baA$ which is also a canonical module of $A$.
	\item Some canonical module $C$ of $A$ has an $\baA$-module structure compatible with its $A$-module structure via the inclusion $A \hookrightarrow \baA$.
	\item For some (actually, any) canonical module $C$ of $A$, the endomorphism ring $\Hom_A(C,C)$ is isomorphic to $\baA$. 
	\end{enumerate}
\end{restatable*}

This theorem has  a  multigraded version. 
Let $R$ be a $\ZZ^n$-graded domain, such that $R_0$ is a field and $R$ is a finitely generated $R_0$-algebra.
Now $R$  admits a *canonical module (i.e., canonical modules in the graded context), which is unique and denoted by $\omega_R$. 
Similarly, the integral closure $\baR$ of $R$  also admits  a *canonical module $\omega_{\baR}$. 
\begin{restatable*}{thm}{Ronegraded}\label{R1}
	The following are equivalent:
	\begin{itemize}
		\item[(1)] $R$ satisfies Serre's $\Rone$. 
		\item[(2)] $\omega_{\baR}$ is a canonical module of $R$ (in the ungraded context).
		\item[(3)] $\omega_{\baR} \cong \omega_R$ in $\stMod R$, that is, $\omega_{\baR}$ is a *canonical module of $R$. 
	\end{itemize}
\end{restatable*}

The motivation for these results stems from a related result by the second author for toric rings.
Indeed, let $\kk$ be a field and let $\M \subset \ZZ^d$ be a positive affine monoid, i.e. a finitely generated (additive) submonoid of $\ZZ^d$ without nontrivial units.
\begin{thm}[Theorem 3.1, \cite{Y15}]\label{shape}
With the above notation, consider the monoid algebra $\kk[\M] = \bigoplus_{a\in\M} \kk x^a$. 
Then the following are equivalent:
	\begin{enumerate}[(a)]
		\item $\kk[\M]$ is normal.
		\item $\kk[\M]$ is Cohen-Macaulay and the canonical module $\omega_{\kk[\M]}$ is isomorphic to the ideal $(x^a \with a \in \M \cap \relint(\RR_{\geq0}\M) )$ of $\kk[\M]$ as (graded or ungraded) $\kk[\M]$-modules.
	\end{enumerate}
Here, for $X \subset \RR^d$, $\relint(X)$ means the relative interior of $X$. 
\end{thm}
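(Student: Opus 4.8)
The plan is to derive \Cref{shape} from the graded main theorem \Cref{R1}, feeding in two classical facts about affine monoid rings. Write $C := \RR_{\geq 0}\M$ for the cone, so that the integral closure of $R := \kk[\M]$ is $\baR = \kk[\bM]$, where $\bM = C\cap\ZZ\M$ is the saturation; note $R$ is a $\ZZ^d$-graded domain with $R_0 = \kk$ (as $\M$ has no nontrivial units) and finitely generated over $\kk$, so \Cref{R1} applies to it. The two inputs are: \emph{(Hochster)} $\kk[\bM]$ is Cohen--Macaulay and normal; and \emph{(Danilov--Stanley)} its canonical module is the interior ideal, $\omega_{\kk[\bM]}\cong I_{\bM}$, where $I_{\bM} := (x^a\mid a\in\relint C\cap\ZZ\M)$. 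I will also use Serre's criterion: since $R$ is a domain, normality is equivalent to $\Rone + \Stwo$, and $\Stwo$ is automatic once $R$ is Cohen--Macaulay. Throughout, $I_\M := (x^a\mid a\in\M\cap\relint C)$ denotes the ideal appearing in statement (b).

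The implication (a)$\Rightarrow$(b) is immediate: if $R$ is normal then $\M = \bM$, so $R$ is Cohen--Macaulay by Hochster, and $\omega_R = \omega_{\kk[\bM]}\cong I_{\bM} = I_\M$ by Danilov--Stanley (here $\M\cap\relint C = \bM\cap\relint C$).

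For (b)$\Rightarrow$(a) I would argue as follows. Assume $R$ is Cohen--Macaulay and $\omega_R\cong I_\M$. By Serre's criterion it suffices to prove $\Rone$, and by \Cref{R1} (equivalence of (1) and (3)) this amounts to showing $\omega_R\cong\omega_{\kk[\bM]}$ in $\stMod R$, i.e. $\omega_R\cong I_{\bM}$. Thus everything reduces to the combinatorial comparison: given that $I_\M$ is the canonical module of $R$, show $I_\M\cong I_{\bM}$, equivalently that $\M\cap\relint C = \relint C\cap\ZZ\M$ up to a lattice translation, which forces $\M = \bM$. To exploit that $I_\M$ is \emph{genuinely} the canonical module (not merely a monomial ideal with the right support), I would fix a fractional-ideal realization $\omega_R = I_\M\subseteq\kk[\ZZ\M]$ and apply the finite-birational-extension formula $\omega_{\kk[\bM]} = \Hom_R(\kk[\bM],\omega_R) = (I_\M : \kk[\bM])$; comparing this with the Danilov--Stanley description then pins down the monomial support of $\omega_R$ relative to that of $I_{\bM}$ and the conductor of $R\hookrightarrow\kk[\bM]$.

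The main obstacle is precisely this support computation. The decisive structural input is that the canonical module $\omega_R$ is maximal Cohen--Macaulay, hence satisfies $\Stwo$ as an $R$-module; combinatorially this means its monomial support must be cut out by the facet inequalities of $C$ and cannot omit a lattice point all of whose codimension-one ``neighbours'' are present. An interior lattice point of $C$ lying in $\ZZ\M\setminus\M$ is exactly such an omission, so the $\Stwo$ property excludes interior holes from the support of $\omega_R\cong I_\M$; the remaining work is to run the same facet-by-facet analysis---most transparently via graded local duality $\omega_R\cong{}^*\!\Hom_\kk(H^{\dim R}_\mm(R),\kk)$ together with the Ishida/Schäfer--Schenzel complex computing $H^{\dim R}_\mm(R)$---to see that the thresholds defining the support coincide with those of $I_{\bM}$, yielding $\omega_R\cong I_{\bM}$ and hence $\M = \bM$. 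Checking that no boundary holes survive this analysis is the delicate point, and is exactly where the full strength of the hypothesis ``$\omega_R\cong I_\M$'' (rather than merely ``$I_\M$ has full interior support'') must be invoked.
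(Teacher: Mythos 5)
Your overall frame is sound and matches the intended derivation: (a)$\Rightarrow$(b) is Hochster plus Danilov--Stanley, and (b)$\Rightarrow$(a) should reduce, via Serre's criterion and Cohen--Macaulayness, to checking $\Rone$. The gap is in how you check $\Rone$. You route it through \Cref{R1}, which obliges you to prove $\omega_R \cong \oW_R$, i.e.\ to compare the graded support of the canonical module with $\ZZ\M \cap \relint(\RR_{\geq0}\M)$. That comparison is the actual mathematical content here --- it is essentially the proof of \Cref{thm:W_R} (and of the original argument in \cite{Y15}) --- and your proposal only gestures at it (``facet-by-facet analysis'', ``thresholds'', the Ishida complex) while explicitly conceding that the boundary case is unresolved. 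So the central step of (b)$\Rightarrow$(a) is missing, not merely compressed. Moreover, one intermediate claim is false: you assert that equality of interior supports, $\M \cap \relint(\RR_{\geq0}\M) = \ZZ\M \cap \relint(\RR_{\geq0}\M)$ up to translation, ``forces $\M = \bM$''. The paper's own example $\M = \NN^2 \setminus \set{(1,0)}$ refutes this: there $W_R = \oW_R$ literally, yet $\M \neq \bM$; boundary holes are invisible to the interior ideals. This also shows that the $\Stwo$-property of $\omega_R$ \emph{as a module} (which, by Aoyama, holds over any ring admitting a canonical module --- in particular in that non-CM example, where $W_R$ \emph{is} canonical) can never by itself rule out boundary holes; normality has to come from $\Rone$ plus Serre's criterion using CM of $R$, not from the support computation alone.

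The repair is to invoke \Cref{thm:W_R} instead of \Cref{R1}. Hypothesis (b) hands you an $R$-module isomorphism $W_R \cong C$ for a canonical module $C$ of $R$ (the ungraded case suffices: the *canonical module is also a canonical module in the ungraded sense, as shown in the proof of \Cref{R1}); this is in particular an injection $W_R \hookrightarrow C$ with $C/W_R = 0$, so \Cref{thm:W_R} yields $\Rone$ outright, and then Serre's criterion together with CM $\Rightarrow \Stwo$ gives normality. This is exactly \Cref{normal} specialized to Cohen--Macaulay rings, and it needs none of the support analysis you sketch. As written, your proof is incomplete precisely at its key step, and completing that step along the lines you indicate would amount to reproving \Cref{thm:W_R} from scratch.
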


Since the ideal $\oW_R := (x^\ba \mid \ba \in \ZZ \M \cap \relint(\RR_{\geq0}\M))$ is known to be a *canonical module of the normalization $\baR=\kk[\ZZ \M \cap \RR_{\geq0}\M]$, 
\Cref{R1} yields the following equivalence. 
\begin{itemize}
\item[(i)] $R=\kk[\M]$ satisfies Serre's $\Rone$.
\item[(ii)] $\oW_R$ is a canonical module of $R$. 
\end{itemize}

The above fact is clearly analogous to Theorem~\ref{shape}, while it uses $\oW_R$ instead of $W_R := (x^\ba \mid \ba \in \M \cap \relint(\RR_{\geq0}\M))$.
We also obtain a direct generalization of \Cref{shape} as follows. 

\begin{restatable*}{thm}{thmWR}
\label{thm:W_R}
	Let $\M$ be a (not necessarily positive) affine monoid and let $C$  be a canonical module of $R = \kk[\M]$.
	Then $R$ satisfies Serre's $\Rone$ if and only if there is an injection $W_R \hookrightarrow C$ with $\dim (C / W_R) < d-1$.
	Here $d$ is the height of the *maximal ideal of $R$.  
\end{restatable*}

\section{General context}
\begin{dfn}\label{canonical module in the local case}
	Let $(A,\mm, K)$ be a noetherian local ring of dimension $d$, and $C$ a finitely generated $A$-module. 
	We say $C$ is a \emph{canonical module} of $A$, if we have an isomorphism $\Hom_A(C, E(K)) \cong H_\mm^d(A)$,
	where $E(K)$ is the injective hull of the residue field $K=A/\mm$.
	If $A$ is not local, we say that a finitely generated $A$-module $C$ is a canonical module, 
	if the localization $C_\mm$ is a canonical module of $A_\mm$ for all maximal ideals $\mm$ of $A$.
\end{dfn}

If $A$ is local, a canonical module $C$ is unique up to isomorphism (if it exists). 
In the general case, it is not necessarily unique.  
In fact, for a rank one projective module $M$, $C \otimes_A M$ is a canonical module again. 

In this section, unless otherwise specified, 
$A$ is a noetherian integral domain satisfying the following conditions:
\begin{itemize}
\item[(i)] For all maximal ideals $\mm$ of $A$, we have $\dim A_\mm = d$. 
\item[(ii)] The integral closure $\baA$ is a finitely generated $A$-module.  (A basic reference of this condition is \cite[Chapter 12]{M}.)
\item[(iii)] $A$ admits a dualizing complex $\cpx D_A$. 
(In the sequel, $\cpx D_A$ will mean the {\it normalized} dualizing complex of $A$.) 
\end{itemize}
These are mild conditions.
In fact, an integral domain which is finitely generated over a field
satisfies them.
We also remark that $H^{-d}(\cpx D_A)$ is a canonical module of $A$. 
Moreover, if $C$ is a canonical module of $A$, then the localization $C_\pp$ is a canonical module of $A_\pp$ for all prime ideals $\pp$ by 
\cite[Corollary~4.3]{A}, because $A$ is a domain.

 Somewhat surprisingly,  the following basic fact does not appear in the literature. 

\Ronegeneral

\begin{proof} 
(1) $\Rightarrow$ (2) : Assume that $A$ satisfies Serre's $\Rone$.
	The short exact sequence
	\[ 0 \rightarrow A \rightarrow \baA \rightarrow \baA/A \rightarrow 0 \]
	yields the following exact sequence:
	\[ 0 = \Ext^{-d}_A(\baA/A, \cpx D_A) \rightarrow \Ext^{-d}_A(\baA, \cpx D_A) \rightarrow \Ext^{-d}_A(A, \cpx D_A) \rightarrow \Ext^{-d+1}_A(\baA/A, \cpx D_A) = 0. \] 
	Since $A$ satisfies Serre's $\Rone$,  we have $(\baA / A)_\pp = 0$ for every prime $\pp$ of height one.
	Hence  $\dim \baA/A < d-1$, and the outer terms of the above sequence vanish. 
      To see this, for a maximal ideal $\mm$ of $A$, note that $\Ext_A^i(\baA/A, D_A^\bullet) \otimes_A A_\mm \cong \Ext_{A_\mm}^i((\baA/A)_\mm, D_{A_\mm}^\bullet)$, where $D_{A_\mm}^\bullet$ is the dualizing complex  of the local ring $A_\mm$. Hence we may assume that 
$A$ is  a local ring with the maximal ideal $\mm$. Then, the Matlis dual of  $\Ext_A^{-i}(\baA/A, D_A^\bullet)$ is the local cohomology module  $H_\mm^i(\baA/A)$. 
	
      Anyway, it follows that $\Ext^{-d}_A(\baA, \cpx D_A) \cong  \Ext^{-d}_A(A, \cpx D_A) \cong H^{-d}(\cpx D_A)$, and thus $\Ext^{-d}_A(\baA, \cpx D_A)$ is a canonical module of $A$. 
    At the same time, since $\baA$ is a finitely generated as an $A$-module, $\Ext^{-d}_A(\baA, \cpx D_A)$ is a canonical module of $\baA$. 

(2) $\Rightarrow$ (3) : Clear.  

(3) $\Rightarrow$ (4) : 
Let $C$ be a canonical module of $A$. 
Since $C$ is a torsionfree $A$-module of rank 1, it can be regarded as an $A$-submodule of  the field of fractions $Q$ of $A$, and $\Hom_A(C,C)$ can be identified with $(C: C) :=\{ \, \alpha \in Q \mid \alpha C \subseteq C \, \}$. 
Moreover, since $C$ is finitely generated, 
every $\alpha \in (C:C) \cong \Hom_A(C,C)$ is integral over $A$ (i.e., $\alpha \in \baA$) by the Cayley-Hamilton theorem, hence $(C:C) \subseteq \baA$.

By assumption, there exists a canonical module $C'$ of $A$ which admits an $\baA$-module structure.
So for $C'$ it holds that $\baA \subseteq (C':C') \subseteq \baA$.
Further, for every maximal ideal $\mm$ of $A$ it holds that $C_\mm = C'_\mm$ and hence
$(C_\mm:C_\mm) = (\baA)_\mm$. 
Here $(\baA)_\mm$ is  the localization of  $\baA$  at the multiplicatively closed set $A \setminus \mm$. 
Thus, it follows that $\Hom_A(C,C) \cong (C:C) = \baA$.

(4) $\Rightarrow$ (1) : Let $\pp$ be  a height one prime ideal of $A$.  Since $A$ is a domain, the localization $A_\pp$ is 
Cohen-Macaulay.  Hence we have 
$$\overline{A_\pp} = (\baA)_\pp \cong [\Hom_A(C,C)]_\pp \cong \Hom_{A_\pp}(C_\pp, C_\pp) \cong A_\pp.$$
This means that $A$ satisfies $\Rone$. 
\end{proof}

In the next result, we assume that $A$ is a noetherian \emph{local} ring satisfying the conditions (ii) and (iii) above. 
The localization of an integral domain which is finitely generated over a field, and a complete local domain are typical example of 
these rings.
The \emph{$({\rm S}_2)$-ification} of a local ring $A$ s the endomorphism ring 
$\Hom_A(C,C)$ of its (unique) canonical module $C$.  See \cite{AG} for detail.

\begin{cor}
Let $A$  be a local ring satisfying the conditions (ii) and (iii) above, and $A'$ its $({\rm S}_2)$-ification. 
Then $A$ satisfies Serre's $({\rm R}_1)$ if and only if so does $A'$ (equivalently, $A'$ is normal). 
\end{cor}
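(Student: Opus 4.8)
The plan is to read the corollary off from \Cref{R1 general} together with Serre's normality criterion. Write $C$ for the canonical module of $A$ (unique up to isomorphism since $A$ is local); as $A$ is local, condition (i) holds automatically, so $A$ satisfies (i)--(iii) and \Cref{R1 general} is available. Exactly as in the proof of (3)$\Rightarrow$(4) there, I would identify $A' = \Hom_A(C,C)$ with the ring $(C:C) = \{\alpha \in Q \mid \alpha C \subseteq C\} \subseteq \baA$, where $Q$ is the field of fractions of $A$. Thus $A'$ is a domain sharing the fraction field $Q$ with $A$ (so $A \subseteq A'$ is birational), it is a submodule of the module-finite $A$-module $\baA$ and hence itself module-finite over $A$ (in particular noetherian), and it sits in the chain $A \subseteq A' \subseteq \baA$. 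By the theory of the $\Stwo$-ification \cite{AG}, the ring $A'$ satisfies $\Stwo$.

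First I would dispose of the parenthetical claim that, for $A'$, satisfying $\Rone$ is the same as being normal. This is immediate from Serre's criterion: since $A'$ is already $\Stwo$, it is normal if and only if it additionally satisfies $\Rone$, and $A'$ is a noetherian domain, so this is precisely the asserted equivalence.

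The heart of the argument is then to show that $A$ satisfies $\Rone$ if and only if $A'$ is normal. For one direction, if $A$ satisfies $\Rone$, then \Cref{R1 general}, (1)$\Rightarrow$(4), gives $A' \cong \baA$; and $\baA$, being a noetherian (by (ii)) integrally closed domain, is normal, whence $A'$ is normal. For the converse I would show that normality of $A'$ forces $A' = \baA$: since $A'$ is integrally closed in $Q$ and contains $A$, it contains every element of $Q$ integral over $A$, that is $\baA \subseteq A'$, which together with $A' \subseteq \baA$ yields $A' = \baA$; then \Cref{R1 general}, (4)$\Rightarrow$(1), shows that $A$ satisfies $\Rone$. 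Chaining this equivalence with the one from the previous paragraph gives the corollary.

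The only content beyond quoting \Cref{R1 general} is the step ``$A'$ normal $\Rightarrow A' = \baA$'' together with the verification that $A'$ is a noetherian domain, birational to $A$, and $\Stwo$, so that Serre's criterion applies. I expect no real obstacle here: the hard part, such as it is, is merely to make these standard structural properties of the $\Stwo$-ification explicit, all of which are recorded in \cite{AG}.
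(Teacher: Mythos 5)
Your proposal is correct, and its forward direction (via the implication (1)\,$\Rightarrow$\,(4) of \Cref{R1 general}, giving $A' \cong \baA$) coincides with the paper's. The converse direction, however, is genuinely different. You argue: $A'$ satisfies $\Rone$ $\Rightarrow$ $A'$ is normal (Serre's criterion, using that the $\Stwo$-ification is $\Stwo$ as a ring, from \cite{AG}) $\Rightarrow$ $\baA \subseteq A'$, hence $A' = \baA$ as subrings of $Q$ $\Rightarrow$ condition (4) of \Cref{R1 general} holds $\Rightarrow$ $A$ satisfies $\Rone$. The paper instead localizes directly: for a height one prime $\pp$, the one-dimensional local domain $A_\pp$ is Cohen--Macaulay, so $A_\pp \cong \Hom_{A_\pp}(C_\pp,C_\pp) \cong [\Hom_A(C,C)]_\pp = A'_\pp$, and $\Rone$ for $A'$ then forces this ring to be regular. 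The trade-off: the paper's argument is self-contained and never needs to know that $A'$ satisfies $\Stwo$ --- that fact is only implicit in the parenthetical ``equivalently, $A'$ is normal,'' which the paper's proof does not address --- whereas your route makes the parenthetical explicit and yields the clean intermediate characterization ``$A$ satisfies $\Rone$ iff $A' = \baA$,'' at the cost of importing from \cite{AG} that $A'$ is $\Stwo$ as a ring (legitimate here, since a local domain with a dualizing complex is universally catenary, so the Aoyama--Goto theory applies, but it is a real external input). Note also that your route re-enters the theorem through (4)\,$\Rightarrow$\,(1), whose proof is exactly the localization argument the paper runs directly. One small point worth making explicit: in your forward direction, the conclusion ``$A' \cong \baA$, hence $A'$ is normal'' needs the isomorphism of (4) to be one of rings, which is fine because your identification $A' = (C\!:\!C)$ is a ring isomorphism and the theorem's proof actually produces the equality $(C\!:\!C) = \baA$ inside $Q$.
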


\begin{proof}
Sufficiency: Clear from the implication (1) $\Rightarrow$ (4) of Theorem~\ref{R1 general}. 

Necessity:  Take a height 1 prime $\pp$ of $A$. 
Let $C$ be a canonical module of $A$, then $C_\pp$ is a canonical module of $A_\pp$, and 
$A_\pp \cong \Hom_{A_\pp}(C_\pp, C_\pp) \cong [\Hom_A(C,C)]_\pp = A'_\pp$. 
If $A'$ satisfies $({\rm R}_1)$, then the localization $A'_\pp$ also. 
\end{proof}

\section{Multigraded context}
In this section, let $R=\bigoplus_{\ba \in \ZZ^n} R_\ba$ be a $\ZZ^n$-graded domain.
We assume that the degree zero part $R_0 = \kk$ is a field and that $R$ is finitely generated as a $\kk$-algebra. 
Note that the ideal $\mm$ which is generated by all homogeneous non-units is a proper ideal, which contains all proper homogeneous ideals.
In other words, $R$ is a \emph{*local ring} with \emph{*maximal ideal} $\mm$.
Note that $\mm$ is not necessarily a maximal ideal in the usual sense, though it is always a prime ideal.

The integral closure $\baR$ of $R$ is also a $\ZZ^n$-graded *local ring with $\baR_0$ a field. 
The *maximal ideal $\nn$ of $\baR$ satisfies $\nn =\sqrt{\mm \baR}$ and $\height \nn= \height \mm$.

Let  $\stMod R$ be the category of $\ZZ^n$-graded $R$-modules, and  $\stmod R$ its full subcategory consisting of finitely generated modules. For $M \in \stMod R$, let 
\[
	M^\vee :=  \Hom_R(M, {}^*\! E(R/\mm)) \cong \bigoplus_{\ba \in \ZZ^n}\Hom_\kk(M_{-\ba}, \kk)
\] 
be the $\ZZ^n$-graded Matlis dual of $M$, where $ {}^*\! E(R/\mm)$ is the injective hull of $R/\mm$ in $\stMod R$. 
For the information on this duality, see \cite[pp.312--313]{BS}. 

A module $C \in \stmod R$ is called a \emph{*canonical module} of $R$ if it satisfies $C^\vee \cong H_\mm^d(R)$ (cf.~\cite[\S 14]{BS}),
where $d := \height \mm$. 
But $R_0$ is a field, so it holds that $M \cong M^{\vee\vee}$ for all $M \in \stmod R$, see \cite[p.313]{BS}.
Hence we can take the Matlis dual of the defining equation and obtain that
\[
	\omega_R := H_\mm^d(R)^\vee
\]
is the unique *canonical module of $R$.

\Ronegraded
\begin{proof}
$R$ can be considered as a graded quotient ring of a partial Laurent polynomial ring $S=\kk[x_1^{\pm 1}, \ldots, x_l^{\pm 1}, y_1, \ldots, y_m]$ admitting a $\ZZ^n$-grading.
So by graded Local Duality \cite[Theorem~14.4.1]{BS}, there is some $\ba_0 \in \ZZ^n$ with $\Ext^c_S(R, S(\ba_0)) \cong \omega_R$, where $c=\dim S-\dim R$. Hence $\omega_R$ is also a canonical $R$-module in the ungraded context (i.e., in the sense of Definition~\ref{canonical module in the local case}).
So the implications $(1) \Leftrightarrow (2) \Leftarrow (3)$ follow directly from \Cref{R1 general}.
The implication $(1) \Rightarrow (3)$ can be proved by a similar way to the implication $(1) \Rightarrow (2)$ of \Cref{R1 general}.
For this, note that $\Ext^{c}_S(\baR, S(\ba_0)) \cong \omega_{\baR}$ as $R$-modules.
It follows from applying the graded local duality theorem to the $R$-module $\baR$.
\end{proof}

\section{Toric context}
In this section, we consider the situation that $R$ is a toric ring, i.e.,
$R = \kk[\M]  = \bigoplus_{a\in\M} \kk x^a $ for some affine monoid $\M \subseteq \ZZ^n$.
Let $\cC := \RR_{\ge 0} \M \subset \RR^n = \RR \otimes_\ZZ \ZZ^n$ be the polyhedral cone spanned by $\M$, 
and $\relint(\cC)$ its  relative interior. 
Set 
\[
	W_R := (x^\ba \mid \ba \in \M \cap \relint(\cC)),
\]
which is a $\ZZ^n$-graded ideal of $R$. 
In \cite[Theorem 3.1]{Y15}, the second author showed that,  for a Cohen-Macaulay toric ring $R$, it is normal if and only if 
$W_R$ is a canonical module. \Cref{thm:W_R} below considerably generalizes this result.

\begin{rem}
Set $\bM:=\ZZ \M \cap \cC$. 
Then $\baR = \kk[\bM]$ is Cohen-Macaulay and the ideal
\[
	\oW_R := (x^\ba \mid \ba \in \bM \cap \relint(\cC))
\]
is its *canonical module $\omega_{\baR}$.
So \Cref{R1} specializes to the statement that $R$ satisfies $\Rone$,  if and only if $\oW_R$ is a *canonical module of $R$, 
and if and only if $\oW_R$ is a canonical module of $R$ in the ungraded context. 
The first equivalence  is implicitly  stated  in \cite{SS} and the previous work \cite{K} of the first author. 
The proofs use explicit computation of the local cohomology $H_\mm^i(R)$.  
\end{rem}

\thmWR

\begin{proof} 
First, assume that $R$ satisfies $\Rone$.
The canonical inclusion $R \hookrightarrow \baR$ restricts to a homomorphism $W_R \hookrightarrow \oW_R$.
Serre's $\Rone$ implies that $\dim \baR / R < d-1$ (cf. the proof of Theorem \ref{R1 general}) 
and thus $\dim \oW_R / W_R < d -1$.
Moreover, $\oW_R$ is a canonical module of $\baR$, so by Theorem \ref{R1} it is a canonical module of $R$ as well, so the claim follows.

Next, assume that there is an inclusion $W_R \hookrightarrow C$ with $\dim C /  W_R < d -1$.
For a prime $\pp \in \Spec R$, we denote by $\pp^*$ the ideal generated by the homogeneous elements in $\pp$.
It is known that $\pp^*$ is again a prime ideal and that $R_\pp$ is regular if and only if $R_{\pp^*}$ is regular, cf. \cite[Propsition 1.2.5]{GW}.

Consider a prime ideal $\pp$ of height one.
If $\pp$ is not homogeneous, then $\pp^* = (0)$ and thus $R_{\pp^*}$ is the field of fractions of $R$. So $R_\pp$ is regular in this case.
On the other hand, if $\pp$ is homogeneous then our assumption implies that $C_\pp = (W_R)_\pp = \pp R_\pp$ is a canonical module of $R_\pp$,
For the second equality we use the fact that $$W_R = \bigcap \set{ \pp \in \stSpec R \with \height \pp > 0},$$
where $\stSpec R$ is the set of $\ZZ^n$-graded prime ideals of $R$. 

Further $R_\pp$ is a one-dimensional domain and thus Cohen-Macaulay, so the injective dimension of the canonical module $\pp R_\pp$ is finite, hence $R_\pp$ is regular by \cite[Corollary~2.3]{GHI}. It means that $R$ satisfies $\Rone$. 
\end{proof}

\begin{cor}\label{normal} Assume that $R$ satisfies Serre's $({\rm S}_2)$. Then the following are equivalent. 
\begin{itemize}
\item[(1)] $R$ is normal. 
\item[(2)] $W_R$ is a canonical module of $R$ (in the ungraded context).  
\item[(3)] $\oW_R$ is a canonical module of $R$ (in the ungraded context).  
\end{itemize}
\end{cor}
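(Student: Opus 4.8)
The plan is to reduce the whole statement to Serre's criterion for normality, which asserts that a noetherian domain is normal if and only if it satisfies both $\Rone$ and $\Stwo$. Since we assume throughout that $R$ satisfies $\Stwo$, condition (1) is equivalent to the single condition that $R$ satisfies $\Rone$. Thus it suffices to show that each of (2) and (3) is equivalent to $R$ satisfying $\Rone$, and I would organize the argument around the chain of implications $(2) \Rightarrow \Rone \Leftrightarrow (3)$ together with $(1) \Rightarrow (2)$.

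For the equivalence $(1) \Leftrightarrow (3)$, I would simply invoke the Remark preceding \Cref{thm:W_R}, which identifies $\oW_R$ with the *canonical module $\omega_{\baR}$ of $\baR$, and then apply the equivalence $(1) \Leftrightarrow (2)$ of \Cref{R1}: the module $\omega_{\baR}$ is a canonical module of $R$ in the ungraded context precisely when $R$ satisfies $\Rone$. Hence (3) holds if and only if $R$ satisfies $\Rone$, which under the standing hypothesis $\Stwo$ is exactly (1). For $(1) \Rightarrow (2)$, I would use that normality of $R = \kk[\M]$ means the monoid $\M$ is saturated, i.e. $\M = \ZZ\M \cap \cC = \bM$; then $\M \cap \relint(\cC) = \bM \cap \relint(\cC)$, so $W_R = \oW_R = \omega_{\baR} = \omega_R$ is a canonical module of $R$.

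The remaining implication $(2) \Rightarrow (1)$ is where I would apply \Cref{thm:W_R} directly: assuming $W_R$ is itself a canonical module of $R$, take $C = W_R$ and the identity map as the required injection $W_R \hookrightarrow C$. The cokernel $C / W_R$ then vanishes, so its dimension is trivially below $d-1$, and \Cref{thm:W_R} yields that $R$ satisfies $\Rone$. Combined with the hypothesis $\Stwo$, Serre's criterion gives normality, i.e.\ (1). I do not expect a genuine obstacle here, since the corollary is a formal consequence of the theorems already established; the only point that demands care is the bookkeeping that lets one set $C = W_R$ with zero cokernel in \Cref{thm:W_R}, and the verification that normality of the monoid algebra is equivalent to saturation of $\M$ so that $W_R$ and $\oW_R$ coincide in case (1).
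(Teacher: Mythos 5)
Your proof is correct and follows exactly the route the paper intends: the corollary is stated without proof as a formal consequence of Theorem~\ref{thm:W_R}, Theorem~\ref{R1}, and Serre's criterion, which is precisely the derivation you spell out. In particular, your key steps --- taking $C = W_R$ with zero cokernel in Theorem~\ref{thm:W_R} to get (2) $\Rightarrow \Rone$, and identifying $\oW_R$ with $\omega_{\baR}$ via the Remark so that Theorem~\ref{R1} gives (1) $\Leftrightarrow$ (3) --- are the intended ones.
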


\begin{exmp}
We give two examples to show that Theorem \ref{thm:W_R} and Corollary \ref{normal} cannot be extended.
\begin{asparaenum}
\item Even if $R$ satisfies $\Rone$,  $W_R$ may not be canonical. 
Indeed, consider the affine monoid
\[\M := \set{(a,b) \in \NN^2 \with a+b \equiv 0 \mod{2}} \setminus\set{(1,1)}. \]
It is not difficult to see that $W_R$ has three minimal generators in the degrees $(1,3),(2,2)$ and $(3,1)$.
On the other hand, the *canonical module $C$ of $R = \kk[\M]$ has only two generators, in the degrees $(1,1)$ and $(2,2)$. Thus $W_R$ is not a canonical module, even in the ungraded context.

\item Moreover, even if $W_R$ is canonical, $R$ may not be normal.  
For example, consider 
\[\M := \NN^2 \setminus \set{(1,0)}. \]
and $R = \kk[\M]$.
Here, $\M$ and thus $R$ are clearly not normal, but nevertheless $W_R$ is a canonical module of $R$.
\end{asparaenum}
\end{exmp}

\section*{Acknowledgments}
We wish to thank Professors W. Bruns, W. Vasconcelos, M. Hashimoto, P. Schenzel  and R. Takahashi 
for several helpful comments and conversations.

\end{document}